\newtheorem{theorem}[subsection]{Theorem}
\newtheorem{lemma}[subsection]{Lemma}
\newtheorem{definition}[subsection]{Definition}
\newcommand{\dist}{{\mathop{\rm dist}}}
\newcommand{\eps}{\varepsilon}
\newcommand{\bN}{{\mathbb{N}}}
\newcommand{\bra}{\langle}
\newcommand{\ket}{\rangle}
\begin{document}


\title[Operator algebras converging to odd spheres]{Sequences of operator algebras converging to odd spheres in the quantum Gromov-Hausdorff distance}

\author[Bhattacharyya]{Tirthankar Bhattacharyya}
\author[Singla]{Sushil Singla}
\address{Department of Mathematics, Indian Institute of Science, CV Raman Road, Bengaluru, Karnataka 560012.}
\email{tirtha@iisc.ac.in, ss774@snu.edu.in}
\subjclass[2020]{46L87, 47L80, 30H20}
\keywords{Generalized Bergman spaces, Toeplitz operators, compact quantum metric space, quantum Gromov-Hausdorff distance}

\maketitle

\begin{abstract}

Marc Rieffel had introduced the notion of the quantum Gromov-Hausdorff distance on compact quantum metric spaces and found a sequence of matrix algebras that converges to the space of continuous functions on $2$-sphere in this distance. One finds applications of similar approximations in many places in the theoretical physics literature. In this paper, we have defined a compact quantum metric space structure on the sequence of Toeplitz algebras on generalized Bergman spaces and have proved that the sequence converges to the space of continuous function on odd spheres in the quantum Gromov-Hausdorff distance.
\end{abstract}

\setlength{\parindent}{0pt}
\setlength{\parskip}{1.6ex}

\pagestyle{headings}

\section{Introduction}

Rieffel introduced the notion of a \emph{compact quantum metric space}.

\begin{definition}[\cite{survey}, Definition 1.1]\label{1} Let $A$ be an order-unit space with identity element $e_A$, and let $L$ be a seminorm on $A$ taking finite values. $L$ is known as a \emph{Lip-norm} if
\begin{enumerate}
\item $L(e_A) = 0$,
\item The topology on the state space $\mathcal S(A)$ of $A$ from the metric $$\rho_L(\mu, \nu)=\sup\{|\mu(a)-\nu(a)| : L(a)\leq 1\},$$ coincides with the $weak^*$ topology on $\mathcal S(A)$.
\end{enumerate}
A compact quantum metric space is a pair $(A, L)$, where $A$ is an order-unit space and $L$ is a Lip-norm $A$.
\end{definition}

The justification for the name `compact quantum metric space' comes from the fact that if $(X, d)$ is a compact metric space, then the Lipschitz seminorm $L_d$ is a Lip-norm on the space of Lipschitz functions (a dense subset of the space of continuous functions $C(X)$ on $X$). Since points of $X$ are extreme points of $\mathcal S(C(X))$ and the restriction of the metric $\rho_{L_d}$ to $X$ coincide with $d$, the data $(C(X), L_d)$ is equivalent to data $(X, d)$. Thus, the notion of a compact metric space $(X, d)$ motivates a compact quantum metric space $(C(X), L_d)$.

Generalizing these ideas from $C(X)$ to a non-commutative $C^*$-algebra, an important class of compact quantum metric spaces has been considered in literature (see \cite{connes, matrix}) by virtue of a Lip-norm on a dense subset of the order-unit space of self adjoint elements of a $C^*$-algebra. Note that the state space of the order-unit space of self adjoint elements of a $C^*$-algebra and the state space of the $C^*$-algebra coincide. So, we can start with a seminorm $L$ on a $C^*$-algebra $\mathcal A$, taking finite values on a dense subset of $\mathcal A$, which satisfies the properties of Definition \ref{1} and $L(a^*) = L(a)$ for all $a\in\mathcal A$. A simple argument (as mentioned in section 2 of \cite{2004}) shows that $L$, and the restriction of $L$ to the order-unit space of self adjoint elements of $\mathcal A$, determine the same metric on $\mathcal S(\mathcal A)$. In this paper, we shall be using $(\mathcal A, L)$ as a notation for the corresponding compact quantum metric space. Although an order-unit space is a vector space over $\mathbb R$ and a $C^*$-algebra is a vector space over the complex field $\mathbb C$, nevertheless the justification for the notation $(\mathcal A, L)$ for the compact quantum metric space comes from the fact above. Most of the classical examples of the subjects arise from $C^*$-algebras. For example, see \cite{connes} for the compact quantum metric spaces arising from a spectral triple and \cite{2002} for compact quantum metric spaces arising from ergodic strongly continuous action of a compact group on $\mathcal A$ by automorphism. For more details about compact quantum metric spaces, see \cite{survey, 1999}.

Taking motivation from the notion of Gromov-Hausdorff distance between two compact metric spaces \cite{Gromov}, Rieffel also introduced the notion of the \emph{quantum Gromov-Hausdorff distance} between two compact quantum metric spaces in \cite{2004}. Let $(\mathcal A, L_{\mathcal A})$ and $(\mathcal B, L_{\mathcal B})$ be two compact quantum metric spaces. Let $\mathcal M(L_{\mathcal A}, L_{\mathcal B})$ denote the set of Lip-norms on $A\oplus B$ that induce $L_{\mathcal A}$ and $L_{\mathcal B}$ on $\mathcal A$ and $\mathcal B$ respectively.

\begin{definition}[\cite{2004}, Definition 4.2] The quantum Gromov-Hausdorff distance $\dist_q(\mathcal A, \mathcal B)$ between $(\mathcal A, L_{\mathcal A})$ and $(\mathcal B, L_{\mathcal B})$,  is defined as
$$\dist_q(\mathcal A,\mathcal B) = \inf\{\dist_{\rho_L} (\mathcal S(\mathcal A), \mathcal S(\mathcal B)) : L\in\mathcal M(L_{\mathcal A}, L_{\mathcal B})\},$$ where $\dist_{\rho_L} (\mathcal S(\mathcal A)$ denotes the classical Gromov-Hausdorff distance between (compact subsets) $\mathcal S(\mathcal A)$ and $\mathcal S(\mathcal B)$ in the metric space $(\mathcal S(A\oplus B), \rho_L)$.
\end{definition}

This is not just an extension of concepts of classical compact metric spaces, but these gave mathematical justification for the assertions found in theoretical physics literature which deal with string theory and related parts of quantum field theory, that the complex matrix algebras converge to two-sphere $S^2$ (or to related spaces). This has been explored in detail by Rieffel in \cite{matrix}. More examples of convergence in the quantum Gromov-Hausdorff distance can be found in \cite{podles, BD, 2005}. To understand this convergence, it is important to understand elements of $\mathcal M(L_{\mathcal A}, L_{\mathcal B})$. In \cite{2004}, Rieffel introduced the notion of \emph{bridges} that we recall below.

\begin{definition}[\cite{2004}, Definition 5.1] A bridge between $(\mathcal A, L_{\mathcal A})$ and $(\mathcal B, L_{\mathcal B})$ is a
seminorm, N on $\mathcal A\oplus\mathcal B$ such that
\begin{enumerate}
\item $N$ is continuous for the norm on $\mathcal A\oplus\mathcal B$,
\item $N(e_{\mathcal A}, e_{\mathcal B}) = 0$ but $N(e_{\mathcal A}, 0)\neq 0$.
\item For any $a\in\mathcal A$ and $\delta>0$, there is an element $b\in\mathcal B$ such that
$$\max\{L_{\mathcal B}(b), N(a, b)\}\leq L_{\mathcal A}(a)+\delta,$$
and similarly for $\mathcal A$ and $\mathcal B$ interchanged.
\end{enumerate}
\end{definition}

It was proved in Theorem 5.2 of \cite{2004} that if $N$ is a bridge between $(\mathcal A, L_{\mathcal A})$ and $(\mathcal B, L_{\mathcal B})$ and $L$ is defined as $$L(a, b) = \max\{L_{\mathcal A}(a), L_{\mathcal B}(b), N(a,b)\},$$ then $L\in\mathcal M(L_{\mathcal A}, L_{\mathcal B})$. In \cite{matrix}, Rieffel found a sequence of complex matrix algebras (arising from  finite-dimensional representations of $SU(2)$) converging to the space of continuous complex valued functions on the two-sphere $S^2$. In this paper, we have found a compact quantum metric space structure on the Toeplitz algebras on {\em generalized Bergman spaces} on the closed unit ball in $\mathbb C^d$ such that a sequence of these algebras converges to the space of continuous functions on the sphere in $\mathbb R^{2d}$, denoted by $C(S^{2d-1})$, in the quantum Gromov-Hasudorff distance.

Motivated by the quantum Gromov-Hasudorff distance, various notions of convergence have been introduced in the literature of noncommutative geometry. Order-unit quantum Gromov–Hausdorff distance was introduced by Li in \cite{Li}. The notion of Gromov-Hausdorff propinquity was introduced by Latr\'{e}moli\`ere, see \cite{AF, 2015, propinquity}. For the notion of the quantum Gromov-Hausdorff propinquity, see \cite{2016}. The notion of Gromov-Hausdorff propinquity for metric spectral triples can be found in \cite{2022}. The notion of Gromov-Hausdorff convergence of state spaces for spectral truncations is of interest to mathematicians, see \cite{preprint, Suijlekom}. For a general setup of spectral truncations in noncommutative geometry and operator systems, see \cite{connes-van}. For operator systems with Lip-norm, even a matricial version of the quantum Gromov-Hasudorff distance can be defined, see \cite{Kerr}. The convergence of the particular sequence obtained in this paper can also be described in terms of the matricial quantum Gromov-Hasudorff distance which we shall remark on in the last section.

In Section \ref{section2}, we explain the space of Toeplitz algebras on the generalized Bergman space and describe the compact quantum metric space structure on these spaces. In Section \ref{section3}, we prove our main theorem that the sequence of Toeplitz algebras converges to odd spheres. In Section \ref{remarks}, a few remarks are mentioned.

\section{Toeplitz algebras on generalized Bergman spaces}\label{section2}

Let $d\in\mathbb N$ be fixed. Let $B^{2d}$ denotes the open unit ball in $\mathbb C^d$. For all $n\geq d$, let $dV_n$ denote the volume measure on $B^{2d}$ given by $$dV_n = c_n(1-|z|^2)^{n-d}\, dV,$$where $dV$ denotes the Lebesuge measure and $c_n=\dfrac{n!}{(n-d)!\ \pi^d}$ is a normalizing constant so that $dV_n$ is a probability measure on $B^{2d}$. We consider
$$\mathcal H_n = \left\{ f : B^{2d}\rightarrow \mathbb C : f\text{ is analytic and } \int_{B^{2d}} |f|^2\, dV_n<\infty\right\}.$$
Then $\mathcal H_n$ with the inner product $\bra f|g\ket_{\mathcal H_n} = \int_{B^{2d}} f\overline{g} \, dV_n$ is the reproducing kernel Hilbert space with the kernel function $$K_n(z,w) = \dfrac{1}{(1-\bra z|w\ket)^{n+1}} \text{ for all } z,w\in\mathbb C^d.$$ For $n=d$, the space $\mathcal H_n$ is known as the Bergman space and for all $n> d$, we call it a generalized Bergman space. For more details, see Chapter 2 of \cite{Bergman}. For each $n\geq d$, an orthonormal basis for $\mathcal H_n$ is given by $(e_{k,n})$, where $$e_{k,n} = \left(\dfrac{(|k|+n)!}{k!\ n!}\right)^{1/2} z^k,$$ where $k=(k_1, \dots, k_d)$ is an $d$-tuple of non-negative integers and we take $|k| = k_1+\dots+k_d$\ , $k! = k_1!\dots k_d!$\ , $z^k = z_1^{k_1}\dots z_d^{k_d}$.

Let $\bar{B}^{2d}$ denote the closed unit ball in $\mathbb C^d$. For $\phi\in C(\bar{B}^{2d})$, we define the Toeplitz operator $T_{\phi,n} : \mathcal H_n\rightarrow\mathcal H_n$ as $$T_{\phi, n}(f)=P_n(\phi f),$$ where $P_n: L^2(B^{2d}, dV_n) \rightarrow \mathcal H_n$ is the orthogonal projection.

Let $\mathcal T_n$ be the $C^*$-subalgebra of operators on $\mathcal H_n$ generated by $\{T_{\phi, n} : \phi\in C(\bar{B}^{2d})\}$. In Theorem 1 of \cite{odd}, it was proved that the Toeplitz algebra $\mathcal T_d$ contains the space of compact operators $\mathscr K(\mathcal H_d)$ and
$$\mathcal T_d =\{T_{\phi, d}+K : \phi\in C(\bar{B}^{2d}) \text{ and } K\in\mathscr K(\mathcal H_d)\}.$$
 The quotient algebra $\mathcal T_d/\mathscr K(\mathcal H_d)$ is $C^*$-isomorphic to $C(S^{2d-1})$ with the isomorphism given by $$T_{\phi,d}+ K \xrightarrow[]{\pi} \phi|_{S^{2d-1}} \text{ for all } \phi\in C(\bar{B}^{2d}), K\in\mathscr K(\mathcal H_d).$$
Using techniques used in \cite{odd}, we prove that the same holds for $\mathcal T_n$.
\begin{theorem}\label{2} For all $n\geq d$, we have $$\mathcal T_n =\{T_{\phi, n}+K : \phi\in C(\bar{B}^{2d}) \text{ and } K\in\mathscr K(\mathcal H_n)\},$$ and the quotient $C^*$-algebra $\mathcal T_n/\mathscr K(\mathcal H_n)$ is $C^*$-isomorphic to $C(S^{2d-1})$ with the isomorphism given by $$T_{\phi, n}+ K \xrightarrow[]{\pi} \phi|_{S^{2d-1}} \text{ for all } f\in C(\bar{B}^{2d}), K\in\mathscr K(\mathcal H_n).$$ So, we have the existence of the following short exact sequence : \begin{equation}0\rightarrow \mathscr K(\mathcal H_n) \xrightarrow[]{i} \mathcal T_n\xrightarrow[]{\pi} C(S^{2d-1})\rightarrow 0,\label{11}\end{equation} where $i$ denotes the inclusion map.
\end{theorem}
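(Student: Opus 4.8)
The plan is to follow the blueprint of the $d = n$ case treated in \cite{odd}, checking that each ingredient used there survives the passage to the weighted measure $dV_n$. The statement really bundles three assertions: (i) $\mathscr K(\mathcal H_n) \subseteq \mathcal T_n$; (ii) the quotient $\mathcal T_n/\mathscr K(\mathcal H_n)$ is generated by the images of the Toeplitz operators and the symbol map $\phi \mapsto \phi|_{S^{2d-1}}$ descends to a $C^*$-isomorphism onto $C(S^{2d-1})$; and (iii) consequently every element of $\mathcal T_n$ has the form $T_{\phi,n} + K$. Step (iii) is a formal consequence of (i) and (ii) together with the fact that the set $\{T_{\phi,n}+K\}$ is already a $*$-algebra (using $T_{\phi,n}T_{\psi,n} - T_{\phi\psi,n} \in \mathscr K(\mathcal H_n)$, the semi-commutator estimate), so the real content is (i) and (ii).

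For (i), the standard route is to show that some rank-one projection, e.g.\ the projection $e_{0,n}\otimes e_{0,n}$ onto the constants, lies in $\mathcal T_n$; since $\mathcal T_n$ is irreducible (the only closed subspaces of $\mathcal H_n$ invariant under all multiplication-type operators $T_{z_j,n}$ and their adjoints are trivial — one checks this on monomials), containing one nonzero compact operator forces $\mathscr K(\mathcal H_n) \subseteq \mathcal T_n$. To produce that projection one writes it as a norm-limit (or an explicit polynomial) in the Toeplitz operators $T_{|z|^{2},n}$, $T_{\bar z_j,n}$, $T_{z_j,n}$; the key computation is that $T_{\bar z_j, n} T_{z_j, n} - T_{z_j,n}T_{\bar z_j,n}$ and similar combinations act diagonally on the orthonormal basis $(e_{k,n})$ with eigenvalues that are explicit rational functions of $|k|$ and $n$, tending to $0$ as $|k|\to\infty$, hence are compact, and a suitable combination has $e_{0,n}$ as an eigenvector with an isolated nonzero eigenvalue. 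This is where the parameter $n$ enters the arithmetic, but the qualitative behaviour (eigenvalues $\to 0$, nondegeneracy at $k = 0$) is unchanged, so the argument of \cite{odd} goes through with the weights $\left(\frac{(|k|+n)!}{k!\,n!}\right)$ replacing the $d$-case weights.

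For (ii), first observe that $\phi \mapsto T_{\phi,n} + \mathscr K(\mathcal H_n)$ is $*$-linear and multiplicative modulo compacts (again the semi-commutator $T_{\phi,n}T_{\psi,n} - T_{\phi\psi,n}$ is compact — for polynomial symbols it is a finite sum of products of the compact commutators above, and one passes to general continuous $\phi$ by Stone--Weierstrass and norm density), so it induces a surjective $*$-homomorphism $\mathcal T_n/\mathscr K(\mathcal H_n) \to $ (the image). Its kernel is $\{\phi : T_{\phi,n}\text{ compact}\}$, and the content is that $T_{\phi,n}$ is compact if and only if $\phi|_{S^{2d-1}} = 0$; equivalently $\|T_{\phi,n}\|_{\mathrm{ess}} = \sup_{S^{2d-1}}|\phi|$. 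The inequality $\|T_{\phi,n}\|_{\mathrm{ess}} \le \sup_{S^{2d-1}}|\phi|$ follows because a $\phi$ vanishing near the sphere gives a compact $T_{\phi,n}$ (the reproducing kernel concentrates on the boundary; the Berezin transform argument, or the fact that $T_{\phi,n}$ is then a limit of finite-rank operators, works verbatim). For the reverse inequality one uses the normalized reproducing kernels $k_{w,n} = K_n(\cdot,w)/\|K_n(\cdot,w)\|$: as $w \to \zeta \in S^{2d-1}$ these tend weakly to $0$, and $\langle T_{\phi,n} k_{w,n}, k_{w,n}\rangle \to \phi(\zeta)$, which forces $\|T_{\phi,n}\|_{\mathrm{ess}} \ge |\phi(\zeta)|$. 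The limit $\langle T_{\phi,n} k_{w,n}, k_{w,n}\rangle = \widetilde{\phi}(w) \to \phi(\zeta)$ is the Berezin-transform-tends-to-boundary-values statement, and checking it for the weight $dV_n$ is the main technical point: one needs that the measures $|k_{w,n}(z)|^2\,dV_n(z)$ concentrate at $\zeta$ as $w\to\zeta$, which is a direct estimate using $K_n(z,w) = (1-\langle z|w\rangle)^{-(n+1)}$ and $\|K_n(\cdot,w)\|^2 = (1-|w|^2)^{-(n+1)}$ — the extra power $n+1$ only sharpens the concentration.

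I expect the main obstacle to be precisely this last estimate — verifying that the Berezin transform of a continuous symbol extends continuously to $\bar B^{2d}$ with boundary values $\phi|_{S^{2d-1}}$ in the weighted setting, i.e.\ that $\int_{B^{2d}} |\phi(z) - \phi(w)|\,|k_{w,n}(z)|^2\,dV_n(z) \to 0$ as $w$ approaches the boundary. Once that is in hand, everything else is bookkeeping: the induced map on the quotient is an injective $*$-homomorphism with image a $C^*$-subalgebra of $C(S^{2d-1})$ containing the coordinate functions $z_j|_{S^{2d-1}}$ and their conjugates, hence all of $C(S^{2d-1})$ by Stone--Weierstrass, giving the isomorphism; and (iii) and the exact sequence \eqref{11} follow immediately.
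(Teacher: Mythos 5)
Your proposal is correct in outline, but it takes a genuinely different route from the paper. You essentially re-prove Coburn's structure theorem from scratch in the weighted setting: irreducibility of $\mathcal T_n$, an explicit nonzero compact element built from commutators on the basis $(e_{k,n})$, compactness of semi-commutators $T_{\phi,n}T_{\psi,n}-T_{\phi\psi,n}$, and the Berezin-transform boundary estimate $\langle T_{\phi,n}k_{w,n}|k_{w,n}\rangle\to\phi(\zeta)$ to identify which symbols give compact Toeplitz operators. The paper instead transfers the whole problem to the known $n=d$ case: it takes the unitary $U:\mathcal H_n\to\mathcal H_d$ with $U(e_{k,n})=e_{k,d}$, shows (along the lines of Lemma 3 of \cite{odd}) that $U^*T_{\phi,d}U-T_{\phi,n}$ is compact, deduces that the commutator ideal of $\mathcal T_n$ sits inside $\mathscr K(\mathcal H_n)$, then uses irreducibility plus Arveson's Theorem 1.4.2 to get $\mathscr K(\mathcal H_n)\subseteq\mathcal T_n$ and cites the rest of Coburn's Theorem 1. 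The trade-off: your argument is self-contained but its burden is exactly the point you flag, namely verifying the concentration of $|k_{w,n}|^2\,dV_n$ at the boundary for the weight $(1-|z|^2)^{n-d}$ (true, and standard for weighted Bergman spaces, but it must be written out); the paper's argument is much shorter because the hard analysis is outsourced to the $n=d$ case via the intertwiner modulo compacts. Two small remarks on your version: the rank-one projection onto the constants is unnecessary — once $\mathcal T_n$ is irreducible, any single nonzero compact element suffices (the same Arveson theorem), and the Toeplitz operator $T_{1-|z|^2,n}$ already does the job, being diagonal with eigenvalues $(n+1-d)/(|k|+n+1)\to 0$; and when passing from polynomial to continuous symbols in the semi-commutator step, say explicitly that the defect $(\phi,\psi)\mapsto T_{\phi,n}T_{\psi,n}-T_{\phi\psi,n}$ is jointly norm-continuous, so compactness survives the Stone--Weierstrass approximation.
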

\begin{proof}
For $n=d$, the theorem was proved in Theorem 1 of \cite{odd}. Let $n>d$ be fixed and let $U: \mathcal H_n\rightarrow \mathcal H_d$ be the unitary transformation given by $U(e_{k,n})=e_{k,d}.$ Along the lines of the proof of Lemma 3 of \cite{odd}, we have that $U^*T_{\phi, d} U - T_{\phi, n}$ is a compact operator. It follows that the commutator ideal $\mathcal C_n$ of $\mathcal T_n$ is contained in $\mathscr K(\mathcal H_n)$. Along the lines of the proof of Lemma 1 of \cite{odd}, $\mathcal T_n$ is an irreducible $C^*$-algebra. Using Theorem 1.4.2 of \cite{Arveson},  $\mathcal C_n=\mathscr K(\mathcal H_n)$. Since $\{T_{\phi, n}+K : \phi\in C(\bar{B}^{2d}) \text{ and } K\in\mathscr K(\mathcal H_n)\}$ is a $C^*$-algebra, we get $$\mathcal T_n=\{T_{\phi, n}+K : \phi\in C(\bar{B}^{2d}) \text{ and } K\in\mathscr K(\mathcal H_n)\}.$$
The rest of the proof is along the lines of the proof of Theorem 1 of \cite{odd}.
\end{proof}

 By virtue of the short exact sequence \eqref{11} in Theorem \ref{2}, we define a compact quantum metric space structure on $\mathcal T_n$ for all $n\geq d$ using Theorem 3.4 of \cite{toeplitz}. Let $n\geq d$ be fixed. Let $(e_j)_{j\geq 1}$ be an enumeration of the orthonormal basis of $\mathcal H_n$ mentioned above. We equip $\mathscr K(\mathcal H_n)$ with the compact quantum metric space structure $(Lip(\mathscr K(\mathcal H_n))\oplus \mathbb RI, \tilde{L}_n)$ given by
\begin{align*} Lip(\mathscr K(\mathcal H_n)) =\{T\in \mathscr K(\mathcal H_n) & : T^*=T, \langle Te_i|e_j\rangle_{\mathcal H_n}\in\mathbb R \text{ for all } i,j\geq 1 \\
&\text{ and }  \sup_{i, j\geq 1} (i+j)^{n+2} |\langle Te_i|e_j\rangle_{\mathcal H_n}|<\infty\},\end{align*}
with $\tilde{L}_n(T) = \sup_{i, j\geq 1} (i+j)^{n+2} \big|\langle Te_i|e_j\rangle_{\mathcal H_n}\big|$ for all $T\in Lip(\mathscr K(\mathcal H_n))$.

By Theorem 3.4 of \cite{toeplitz}, $(Lip(\mathscr K(\mathcal H_n))\oplus \mathbb RI, \tilde{L}_n)$ is a compact quantum metric space. Now, we consider a fixed positive linear splitting $\sigma : C(S^{2d-1})\rightarrow \mathcal T_n$ of the short exact sequence \eqref{11}, for example, $f\in C(S^{2d-1})$ is mapped to $T_{\tilde{f}}$ where $\tilde{f}$ is the unique solution of the Dirichlet's problem. Then using Theorem 3.6 of \cite{toeplitz}, we have the folllowing compact quantum compact space structure on $\mathcal T_n$.
\begin{theorem}\label{44} The space of Toeplitz operators on the generalized Bergman space $\mathcal T_n$ has the compact quantum space structure $(Lip(\mathcal T_n), L_n)$ where
$$Lip(\mathcal T_n)=Lip (\mathscr K(\mathcal H_n))\oplus \{T_{\sigma(f)} : f \text{ real valued Lipschitz function on } S^{2d-1}\},$$ and $L_n(T_f + K) = \tilde{L}_n(K) + L(f|_{S^{2d-1}})$ for all $T_f + K \in Lip(\mathcal T_n)$ and $L(f|_{S^{2d-1}})$ is the Lipschitz norm of $f|_{S^{2d-1}}$.\end{theorem}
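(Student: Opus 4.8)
The plan is to obtain Theorem \ref{44} as a direct application of the general gluing result for compact quantum metric spaces along a short exact sequence, namely Theorem 3.6 of \cite{toeplitz}, applied to the extension \eqref{11} coming from Theorem \ref{2}. That machinery takes as input a compact quantum metric space structure on a unitization of the ideal, one on the quotient, and a unital positive linear section of the quotient map. For the ideal we use $(Lip(\mathscr K(\mathcal H_n))\oplus\mathbb R I,\tilde L_n)$, which is a compact quantum metric space by Theorem 3.4 of \cite{toeplitz}; for the quotient we use $(C(S^{2d-1}),L)$ with $L$ the Lipschitz seminorm for the standard (say, geodesic) metric on $S^{2d-1}$, which is a Lip-norm by the classical fact recalled after Definition \ref{1}. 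So the remaining work is to check that the chosen $\sigma$ meets the hypotheses of Theorem 3.6 of \cite{toeplitz}, and then to read off the resulting Lip-norm.

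First I would verify that $\sigma$ is a well-defined unital positive linear section. For $f\in C(S^{2d-1})$ the Dirichlet problem on $B^{2d}\subset\mathbb R^{2d}$ has a unique solution $\tilde f$, harmonic on $B^{2d}$ and continuous on $\bar B^{2d}$ with $\tilde f|_{S^{2d-1}}=f$, since the ball is a regular domain; hence $\tilde f\in C(\bar B^{2d})$ and $\sigma(f):=T_{\tilde f,n}$ is defined, with $\pi(\sigma(f))=\tilde f|_{S^{2d-1}}=f$ by Theorem \ref{2}. Linearity of $\sigma$ follows from uniqueness of the harmonic extension together with linearity of $\phi\mapsto T_{\phi,n}$; unitality holds because $\tilde 1\equiv 1$ and $T_{1,n}=I$. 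Positivity is the point that needs the maximum principle: if $f\geq 0$ then $\tilde f\geq 0$ on $B^{2d}$, hence on $\bar B^{2d}$ by continuity, so $\langle T_{\tilde f,n}g\,|\,g\rangle_{\mathcal H_n}=\int_{B^{2d}}\tilde f\,|g|^2\,dV_n\geq 0$ for all $g\in\mathcal H_n$, i.e. $\sigma(f)\geq 0$. One also notes $\sigma$ sends real-valued functions to self-adjoint operators, matching the self-adjointness conventions under which both factor structures were defined.

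With these checks in place, Theorem 3.6 of \cite{toeplitz} yields the Lip-norm on $\mathcal T_n$. Since $\pi\circ\sigma=\mathrm{id}$, the section $\sigma$ is injective with $\sigma(C(S^{2d-1}))\cap\mathscr K(\mathcal H_n)=\{0\}$, and each $a\in\mathcal T_n$ splits uniquely as $a=(a-\sigma(\pi(a)))+\sigma(\pi(a))$ with $a-\sigma(\pi(a))\in\ker\pi=\mathscr K(\mathcal H_n)$; this gives the internal direct sum $\mathcal T_n=\mathscr K(\mathcal H_n)\oplus\sigma(C(S^{2d-1}))$, and restricting to the Lipschitz subspaces of the two factors produces exactly the stated description of $Lip(\mathcal T_n)$ together with the formula $L_n(T_f+K)=\tilde L_n(K)+L(f|_{S^{2d-1}})$.

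The part requiring the most care, beyond this bookkeeping, is confirming that the hypotheses of Theorem 3.6 of \cite{toeplitz} are literally satisfied here: that the Lip-norms on the ideal and the quotient are compatible on the common copy of $\mathbb R I$ (both vanish there), that the section interacts with them in whatever way that theorem demands, and — the genuinely analytic content — that $\rho_{L_n}$ metrizes the weak-$*$ topology on $\mathcal S(\mathcal T_n)$. The expectation is that this last point is already subsumed once the section is unital and positive; a self-contained argument would instead have to analyze how states of $\mathcal T_n$ distribute between $\mathscr K(\mathcal H_n)$ and $C(S^{2d-1})$ and prove total boundedness of the $L_n$-unit ball directly, which is precisely the work packaged into Theorems 3.4 and 3.6 of \cite{toeplitz}.
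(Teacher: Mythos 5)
Your proposal is correct and follows essentially the same route as the paper: the paper also obtains Theorem \ref{44} by equipping $\mathscr K(\mathcal H_n)$ with the structure from Theorem 3.4 of \cite{toeplitz} and then invoking Theorem 3.6 of \cite{toeplitz} with the positive linear splitting $\sigma$ given by the solution of the Dirichlet problem, offering no further argument. Your additional verifications (linearity, unitality, and positivity of $\sigma$ via the maximum principle, and the direct-sum bookkeeping) are exactly the routine checks the paper leaves implicit.
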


Now we claim that $\mathcal T_n$ converges to $C(S^{2d-1})$ with the compact quantum structure of real valued Lipschitz functions with the Lipschitz norm in the quantum Gromov-Hausdorff distance.

\section{Main theorem}\label{section3}

Now, we prove our main theorem. The idea of the proof is the same as that used by Rieffel in \cite{matrix}.
\begin{theorem}\label{3} The sequence of compact quantum metric spaces $(Lip(\mathcal T_n), L_n)$ converges to $C(S^{2d-1})$ in the quantum Gromov-Hausdorff distance.
\end{theorem}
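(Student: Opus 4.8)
The plan is to build, for each $n$, an explicit bridge $N_n$ between $(Lip(\mathcal T_n), L_n)$ and $(C(S^{2d-1}), L)$ whose associated Gromov--Hausdorff distance between the state spaces tends to $0$. Since $\mathcal T_n = \mathscr K(\mathcal H_n) \oplus T_{\sigma(\,\cdot\,)}$ as a vector space and $C(S^{2d-1})$ sits at the quotient level, the natural candidate is to compare an element $T_f + K \in \mathcal T_n$ with a function $g \in C(S^{2d-1})$ through the symbol map $\pi$: set something like $N_n(T_f + K, g) = \gamma_n \norm{\pi(T_f+K) - g}_\infty = \gamma_n\norm{f - g}_\infty$ for a suitable scaling constant $\gamma_n$, possibly with a correction term controlling $K$. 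First I would verify the three bridge axioms: continuity is immediate; the second axiom $N_n(I, I) = 0$ but $N_n(I, 0) \neq 0$ holds because $\pi(I) = 1$; the third (approximate-isometry) axiom is where the work lies — given $g$ Lipschitz on $S^{2d-1}$ one must produce $T_f + K$ with $L_n(T_f+K)$ and $N_n(T_f+K,g)$ both $\le L(g) + \delta$, and conversely.

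The key quantitative input is that as $n \to \infty$ the Toeplitz quantization becomes asymptotically multiplicative and the "radius" of $\mathscr K(\mathcal H_n)$ in the metric $\tilde L_n$ shrinks. Concretely, I would exploit that $\tilde L_n(T) = \sup_{i,j}(i+j)^{n+2}|\langle Te_i|e_j\rangle|$ forces the $\tilde L_n \le 1$ ball of $\mathscr K(\mathcal H_n)$ to have operator-norm diameter controlled by $\sum_{i,j}(i+j)^{-(n+2)}$, which decays in $n$; this bounds the part of the state space of $\mathcal T_n$ coming from the kernel. For the quotient direction, given $g$ Lipschitz, take $f$ to be (a smooth extension of) $g$, so that $\pi(T_{\sigma(f)}) = g$ and $L_n(T_{\sigma(f)}) = L(g)$ with $K = 0$; then $N_n(T_{\sigma(f)}, g) = 0$. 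Going the other way, given $T_f + K$ with $L_n \le 1$, choose $g = \pi(T_f+K)|_{S^{2d-1}} = f|_{S^{2d-1}}$, whose Lipschitz constant is $\le L(f|_{S^{2d-1}}) \le L_n(T_f+K) \le 1$, and again $N_n = 0$; the only slack needed is to absorb the kernel contribution, handled by the decay above. One then estimates $\dist_{\rho_{L_n'}}(\mathcal S(\mathcal T_n), \mathcal S(C(S^{2d-1})))$ where $L_n' = \max\{L_n, L, N_n\}$, showing it is $O(\gamma_n^{-1} + r_n)$ with $r_n$ the kernel radius, and choosing $\gamma_n \to \infty$ slowly enough that $\gamma_n r_n \to 0$ still gives a genuine bridge while $\gamma_n^{-1} \to 0$.

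The main obstacle I anticipate is the interface between the kernel summand and the quotient summand inside a single Lip-norm: the definition $L_n(T_f + K) = \tilde L_n(K) + L(f|_{S^{2d-1}})$ is additive, so controlling $N_n$ on a general element requires simultaneously handling the compact perturbation and the symbol, and one must check that the bridge's third axiom can be met without the correction term blowing up $L_n$. I would resolve this by first reducing to the case $K = 0$ (pure Toeplitz operators with Lipschitz symbol), where the bridge is essentially an isometry, and then showing that every state of $\mathcal T_n$ is $\rho_{L_n'}$-close to a state that factors through $\pi$, using that states not killing $\mathscr K(\mathcal H_n)$ are detected only by matrix entries with the rapidly-growing weights $(i+j)^{n+2}$. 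A secondary technical point is to confirm that $\sigma$ (solving the Dirichlet problem) indeed sends Lipschitz functions on the sphere to operators whose symbol is exactly the original function and that $L(\widetilde{f}|_{S^{2d-1}}) = L(f)$, which is built into the construction in Theorem \ref{44}. Finally I would assemble the pieces: $\dist_q(\mathcal T_n, C(S^{2d-1})) \le \dist_{\rho_{L_n'}}(\mathcal S(\mathcal T_n), \mathcal S(C(S^{2d-1}))) \to 0$.
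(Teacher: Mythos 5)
Your proposal follows essentially the same route as the paper: a bridge of the form $N(T,g)=\mathrm{const}\cdot\|\pi(T)-g\|_\infty$, verification of the bridge axioms via the exact partners $g=\pi(T)$ and $T=\sigma(g)$, and the key decay estimate that the $\tilde L_n$-unit ball of $\mathscr K(\mathcal H_n)$ has small operator norm because of the weights $(i+j)^{n+2}$ (this is the paper's Lemma \ref{22}, with bound $\zeta(n+2)-1$), which then gives the two-sided Hausdorff estimate on state spaces by pulling states back along $\sigma$ and $\pi$. The only cosmetic difference is that the paper fixes the scaling constant to be $\gamma_{n_0}^{-1}$ for all $n\geq n_0$ rather than letting it vary with $n$, and your extra requirement that $\gamma_n r_n\to 0$ ``to keep a genuine bridge'' is harmless but unnecessary, since the bridge axioms hold for any positive scaling once the partners above make $N$ vanish exactly.
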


The following lemma is useful.

\begin{lemma}\label{22} For all $n\geq d$ and for all $T\in\mathcal T_n$, we have $$\|T-\sigma(\pi(T))\|\leq \gamma_n L_n(T),$$ where $\gamma_n$ is a decreasing sequence converging to $0.$
\end{lemma}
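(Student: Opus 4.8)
The plan is to bound $\|T - \sigma(\pi(T))\|$ by splitting $T\in Lip(\mathcal T_n)$ according to the direct sum decomposition $T = T_{\sigma(f)} + K$ with $f$ a real-valued Lipschitz function on $S^{2d-1}$ and $K\in\mathscr K(\mathcal H_n)$. Since $\pi$ annihilates $\mathscr K(\mathcal H_n)$ and $\pi\circ\sigma = \mathrm{id}$, we have $\pi(T) = f$ and hence $\sigma(\pi(T)) = T_{\sigma(f)}$, so that $T - \sigma(\pi(T)) = K$. Thus the claim reduces to proving $\|K\| \leq \gamma_n \tilde L_n(K)$ for all $K\in Lip(\mathscr K(\mathcal H_n))$, because $L_n(T) = \tilde L_n(K) + L(f|_{S^{2d-1}}) \geq \tilde L_n(K)$; the Lipschitz part of $L_n$ only helps.

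First I would estimate the operator norm of a compact self-adjoint $K$ with matrix entries $\langle Ke_i|e_j\rangle$ (real) in terms of $\tilde L_n(K) = \sup_{i,j}(i+j)^{n+2}|\langle Ke_i|e_j\rangle|$. The natural route is a Schur-test / Hilbert--Schmidt bound: writing $a_{ij} = \langle Ke_i|e_j\rangle$, one has $|a_{ij}| \leq \tilde L_n(K)\,(i+j)^{-(n+2)}$, so
\begin{align*}
\|K\|^2 \leq \|K\|_{HS}^2 = \sum_{i,j\geq 1} |a_{ij}|^2 \leq \tilde L_n(K)^2 \sum_{i,j\geq 1} \frac{1}{(i+j)^{2n+4}}.
\end{align*}
Since $2n+4 \geq 2d+4 > 2$, the double series $S_n := \sum_{i,j\geq 1}(i+j)^{-(2n+4)}$ converges, so $\|K\| \leq \sqrt{S_n}\,\tilde L_n(K)$. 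Setting $\gamma_n = \sqrt{S_n}$ gives the inequality. (Alternatively one could use a Schur test with weights to get a sharper constant, but the Hilbert--Schmidt bound is entirely sufficient and cheapest.)

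It then remains to check that $\gamma_n = \sqrt{S_n}$ is decreasing in $n$ and tends to $0$. Monotonicity is immediate since each summand $(i+j)^{-(2n+4)}$ strictly decreases as $n$ increases (as $i+j\geq 2 > 1$), hence $S_{n+1} < S_n$. For the limit, note $S_n = \sum_{m\geq 2} (m-1) m^{-(2n+4)} \leq \sum_{m\geq 2} m^{-(2n+3)} \to 0$ as $n\to\infty$ by dominated convergence (each term $\to 0$ and is dominated by its value at, say, $n=d$, which is summable); equivalently $S_n \leq 2^{-(2n+2)}\sum_{m\geq 2} (m-1)(m/2)^{-(2n+4)}$ and the sum is bounded for $n\geq d$, so $S_n = O(4^{-n}) \to 0$. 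Either way $\gamma_n \downarrow 0$.

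I do not expect a serious obstacle here; the only point requiring a little care is making sure the reduction ``$L_n(T)\geq \tilde L_n(K)$'' is used correctly and that the estimate is stated for all of $\mathcal T_n$ rather than just $Lip(\mathcal T_n)$ — but for $T\notin Lip(\mathcal T_n)$ one has $L_n(T) = +\infty$ by convention, so the inequality holds trivially. The mildly delicate step is the convergence and monotonicity bookkeeping for $\gamma_n$, but as sketched above this is a routine estimate on the double $p$-series, so the lemma follows.
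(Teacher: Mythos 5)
Your proof is correct, and its skeleton is the same as the paper's: write $T=T_{\sigma(f)}+K$, note $T-\sigma(\pi(T))=K$, and reduce to bounding $\|K\|$ by $\tilde L_n(K)$ times a constant obtained by summing the weight $(i+j)^{-(n+2)}$, using $\tilde L_n(K)\le L_n(T)$. The only genuine divergence is the norm estimate you use to pass from matrix entries to the operator norm: you take the Hilbert--Schmidt bound $\|K\|\le\|K\|_{HS}$, which yields $\gamma_n=\bigl(\sum_{m\ge 2}(m-1)\,m^{-(2n+4)}\bigr)^{1/2}$, whereas the paper uses the row-sum (Schur-type) bound $\|K\|\le\sup_{j}\sum_{i}|\langle Ke_i|e_j\rangle_{\mathcal H_n}|$ together with $\sum_i (i+j)^{-n-2}\le\sum_i(i+1)^{-n-2}$, giving the closed-form constant $\gamma_n=\zeta(n+2)-1$. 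Both constants are decreasing in $n$ and tend to $0$ (indeed both decay like $2^{-n}$, the $m=2$, respectively $i=1$, term being dominant), and the main theorem only uses these two qualitative facts, so either choice of $\gamma_n$ serves equally well; your route trades the slightly sharper bookkeeping of the Schur test for the cheapest possible inequality, at the cost of a less tidy expression for $\gamma_n$. Your two side remarks are also correctly handled: $L_n(T)\ge\tilde L_n(K)$ because the Lipschitz part of $L_n$ is nonnegative, and for $T\notin Lip(\mathcal T_n)$ the inequality is vacuous since $L_n(T)=+\infty$ there (a point the paper leaves implicit).
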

\begin{proof} Let $T=T_{\sigma(f)}+K\in \mathcal T_n$ for some $K\in Lip(\mathscr K(\mathcal H_n))$ and a real valued Lipschitz function $f$ on $C(S^{2d-1})$. Then we have,
\begin{align*}\|T-\sigma(\pi(T))\| = \|K\| & \leq \sup_{j\geq 1} \sum_{i\geq 1} \big|\langle Te_i|e_j\rangle_{\mathcal H_n}\big| \\
& \leq \tilde{L}_n(K)\sup_{j\geq 1} \sum_{i\geq 1}(i+j)^{-n-2}\\
& \leq \tilde{L}_n(K) \sum_{i\geq 1}(i+1)^{-n-2}\\
& \leq L_n(T) (\zeta(n+2)-1),
\end{align*} where $\zeta$ denotes the Riemann-Zeta function. The proof is completed by taking $\gamma_n =\zeta(n+2)-1$.
\end{proof}

\textit{Proof of Theorem \ref{3}}. Let $\eps>0$. Let $n_0\in\bN$ be such that $\gamma_n\leq \eps/2$ for $n\geq n_0$.

For all $T\in \mathcal T_n$ and $f\in C(S^{2d-1})$, we define  $N(T, f) = \gamma_{n_0}^{-1} \|\pi(T)-f\|$  and $$\tilde{L}(T, f) = \max\{L_n(T), L(f), N(T, f)\}.$$ Since $L_n(\sigma(f))=L(f)$ and $N(\sigma(f), f)=0$, we have $$\tilde{L}(\sigma(f), f) = L(f).$$
Also, $L(\pi(T)) \leq L_n(T)$ and $N(T, \pi(T)) = 0$. Hence,
$$\tilde{L}(T, \pi(T)) = L_n(T).$$
So, $N$ is bridge between the compact quantum metric spaces $\mathcal T_n$ and $C(S^{2d-1})$. By Theorem 5.2 of \cite{2004}, $\tilde{L}$ is a Lip-norm on $\mathcal T_n\oplus C(S^{2d-1})$ that induces the previously defined Lip-norms on the compact quantum metric spaces $\mathcal T_n$ and $C(S^{2d-1})$.

By Proposition 1.3 of \cite{matrix}, we know that the state space of $C(S^{2d-1})$, denoted by $\mathcal S_{C(S^{2d-1})}$, is contained in the $\gamma_{n_0}$ neighbourhood of $\mathcal S_{\mathcal T_n}$ for $\rho_{\tilde{L}}$. So,  $\mathcal S_{C(S^{2d-1})}$ is in the $\eps/2$ neighbourhood of $\mathcal S_{\mathcal T_n}$ for $\rho_{\tilde{L}}$.

Let $T\in \mathcal T_n$ and $f\in C(S^{2d-1})$ such that $\tilde{L}(T, f)\leq 1$ and $\nu\in\mathcal S_{\mathcal T_n}$. We consider $\mu = \nu\circ\sigma\in\mathcal S_{C(S^{2d-1})}$. For all $n\geq n_0$, we have
\begin{align*} |\mu(T, f)-\nu(T,f)| & = |\nu(T-\sigma(f))|\\
& \leq \|T-\sigma(\pi(T))\| + \|\sigma(\pi(T))-\sigma(f))\| \\
&\leq \|T-\sigma(\pi(T))\| + \|\pi(T))-f\| \\
& \leq 2\gamma_{n_0}.
\end{align*}
The last inequality uses Lemma \ref{22} and the fact that $\tilde{L}(T, f)\leq 1$ (this implies that $\|\pi(T))-f\|\leq \gamma_{n}$ and $L_n(T)\leq 1$). Hence for all $n\geq n_0$, we have that $\mathcal S_{\mathcal T_n}$ is contained in the $\eps$ neighbourhood of $\mathcal S_{C(S^{2d-1})}$ for $\rho_{\tilde{L}}$.

Hence, we get that the quantum Gromov-Hausdorff distance between the compact quantum metric spaces on $\mathcal T_n$ and $C(S^{2d-1})$ is less than or equal to $\eps$ for all $n\geq n_0$, that is,  $\mathcal T_n$ converges to $C(S^{2d-1})$ in the quantum Gromov-Hausdorff distance.\qed

\section{Remarks}\label{remarks}

\textbf{Remark 4.1}. Since $\mathcal T_n$ and $C(S^{2d-1})$ are unital $C^*$-algebras, these are also examples of Lip-normed unital $C^*$-algebras with the compact quantum metric space structures given in Theorem \ref{44}. Using Theorem 3.9 and Theorem 3.11 of \cite{Paulsen}, the maps $\pi$ and $\sigma$ are completely positive maps. Using arguments similar to the ones in Example 3.12 of \cite{Kerr}, Lemma \ref{22} holds at each matrix level and we get that $\mathcal T_n$ converges to $C(S^{2d-1})$ in the complete distance also, as defined in \cite{Kerr}.

\textbf{Remark 4.2}. With the same methods as in the proof of Theorem \ref{3}, we can get a more general result. Let $\mathcal H_n$ be a sequence of separable Hilbert spaces. Let $\mathcal A$ be a $C^*$-algebra with a compact quantum metric space structure. If $\mathcal B_n$ be a sequence of $C^*$-algebras such that there exists a split exact sequence of $C^*$-algebra isomorphisms
$$0\rightarrow \mathscr K(\mathcal H_n) \rightarrow \mathcal B_n\rightarrow \mathcal A,$$
then there exists a compact quantum metric space structure on $\mathcal B_n$ such that $\mathcal B_n$ converges to $\mathcal A$ in quantum Gromov-Hausdorff distance.

\textbf{Remark 4.3}. For each $\alpha\in[d,\infty)$, we can consider the volume measure $dV_\alpha$ on $B^{2d}$ given by $$dV_\alpha =\dfrac{\Gamma(\alpha+1)}{\Gamma(\alpha-d+1)\pi^d}(1-|z|)^{n-\alpha}\, dV.$$ Then the space $\mathcal H_\alpha$ defined as the set of all analytic functions on $B^{2d}$ which are square-integrable with respect to $dV_\alpha$ and the Toeplitz algebra $\mathcal T_\alpha$ on $\mathcal H_\alpha$ are well defined. Since arguments in this paper and \cite{toeplitz} work when natural numbers are replaced by a positive real number, we get the following extension of Theorem \ref{3} : For $\alpha\in[d,\infty)$, the net of compact quantum metric spaces $(Lip(\mathcal T_\alpha), L_\alpha)$ converges to $C(S^{2d-1})$ in the quantum Gromov-Hausdorff distance.

\textbf{Acknowledgments}

We would like to thank Sutanu Roy for many useful discussions. The research of the authors is supported by JCB/2021/000041 of SERB, India.


\begin{thebibliography}{10}

\bibitem{podles} Aguilar, K.; Kaad, J.; Kyed, D. : The Podle\'{s} spheres converge to the sphere. \textit{Comm. Math. Phys.} 392 (2022), 1029--1061.

\bibitem{AF} Aguilar, K.; Latr\'{e}moli\`ere, F. : Quantum ultrametrics on AF algebras and the Gromov-Hausdorff propinquity. \textit{Studia Math.} 231 (2015), 149--193.

\bibitem{BD} Aguilar, K.; Latr\'{e}moli\`ere, F.; Rainone, T. : Bunce-Deddens algebras as quantum Gromov-Hausdorff distance limits of circle algebras. \textit{Integral Equations Operator Theory} 94 (2022), Paper No. 2, 42 pp.

\bibitem{Arveson} Arveson W. : \textit{An invitation to $C^*$-algebras.} Graduate Texts in Mathematics, Springer-Verlag, New York-Heidelberg, 1976.

\bibitem{toeplitz} Chakraborty P. S. : From $C^*$-algebra extensions to compact quantum metric spaces, quantum SU(2), Podle\'{s} spheres and other examples. \textit{J. Aust. Math. Soc.} 90 (2011), no. 1, 1--8.

\bibitem{odd} Coburn, L. A. : Singular integral operators and Toeplitz operators on odd spheres. \textit{Indiana Univ. Math. J.} 23 (1973/74), 433--439.

\bibitem{connes} Connes, A. : Compact metric spaces, Fredholm modules, and hyperfiniteness. \textit{Ergodic Theory Dynam. Systems} 9 (1989), no. 2, 207--220.

\bibitem{connes-van} Connes, A.; van Suijlekom, W. D. : Spectral truncations in noncommutative geometry and operator systems. \textit{Comm. Math. Phys.} 383 (2021), 2021--2067.

\bibitem{Gromov} Gromov, M. : Groups of polynomial growth and expanding maps. \textit{Inst. Hautes Études Sci. Publ. Math.} 53 (1981), 53--73.

\bibitem{Kerr} Kerr, D. : Matricial quantum Gromov-Hausdorff distance. \textit{J. Funct. Anal.} 205 (2003), 132–167.

\bibitem{2005} Latr\'{e}moli\`ere, F. : Approximation of quantum tori by finite quantum tori for the quantum Gromov-Hausdorff distance. \textit{J. Funct. Anal.} 223 (2005), 365--395.

\bibitem{2015} Latr\'{e}moli\`ere, F. :  The dual Gromov-Hausdorff propinquity. \textit{J. Math. Pures Appl.} (9) 103 (2015), no. 2, 303--351.

\bibitem{2022} Latr\'{e}moli\`ere, F. : The Gromov-Hausdorff propinquity for metric spectral triples. \textit{Adv. Math.} 404 (2022), Paper No. 108393, 56 pp.

\bibitem{2016} Latr\'{e}moli\`ere, F. : The quantum Gromov-Hausdorff propinquity. \textit{Trans. Amer. Math. Soc.} 368 (2016), 365--411.

\bibitem{propinquity} Latr\'{e}moli\`ere, F. : Quantum metric spaces and the Gromov-Hausdorff propinquity. Noncommutative geometry and optimal transport, 47–133, Contemp. Math., 676, \textit{Amer. Math. Soc., Providence, RI}, 2016.

\bibitem{Li} Li, H. : Order-unit quantum Gromov-Hausdorff distance. \textit{J. Funct. Anal.} 231 (2006), 312--360.

\bibitem{Paulsen} Paulsen, V. : Completely bounded maps and operator algebras. Cambridge Studies in Advanced Mathematics, 78. \textit{Cambridge University Press, Cambridge}, 2002.

\bibitem{survey} Rieffel, M. A. : Compact quantum metric spaces. \textit{Operator algebras, quantization, and noncommutative geometry}, 315--330, Contemp. Math., 365, Amer. Math. Soc., Providence, RI, 2004.

\bibitem{preprint} Rieffel, M. A. : Convergence of Fourier truncations for compact quantum groups and finitely generated groups. \textit{preprint}.

\bibitem{2004} Rieffel, M. A. : Gromov-Hausdorff distance for quantum metric spaces. \textit{Mem. Amer. Math. Soc.} 168 (2004), 1--65.

\bibitem{2002} Rieffel, M. A. : Group $C^*$-algebras as compact quantum metric spaces. \textit{Doc. Math.} 7 (2002), 605--651.

\bibitem{matrix} Rieffel, M. A. : Matrix algebras converge to the sphere for quantum Gromov-Hausdorff distance. \textit{Mem. Amer. Math. Soc.} 168 (2004), 67--91.

\bibitem{1999} Rieffel, M. A. : Metrics on state spaces. \textit{Doc. Math.} 4 (1999), 559--600.

\bibitem{Suijlekom} van Suijlekom, W. D. : Gromov-Hausdorff convergence of state spaces for spectral truncations. \textit{J. Geom. Phys.} 162 (2021), Paper No. 104075, 11 pp

\bibitem{Bergman} Zhu, K. : Spaces of holomorphic functions in the unit ball. Graduate Texts in Mathematics, 226. \textit{Springer-Verlag, New York}, 2005.

\end{thebibliography}
\end{document}